\newtheorem{thm}{Theorem}[section]
\newtheorem{cor}[thm]{Corollary}
\newtheorem{prop}[thm]{Proposition}
\theoremstyle{definition}
\newtheorem{definition}[thm]{Definition}
\numberwithin{equation}{section}
\begin{document}



\title{Computations of Galois Representations Associated to Modular Forms of Level One}

\author{Peng Tian\\
Department of Mathematics\\
Nanjing University\\
210093, Nanjing, P. R. China\\
and\\
Dipartimento di Matematica\\
Universit\`{a} di Roma \textquotedblleft Tor Vergata''\\
00133, Rome, Italy.\\
E-mail: tianpeng.china@gmail.com}

\date{}

\maketitle


\renewcommand{\thefootnote}{}

\footnote{2010 \emph{Mathematics Subject Classification}: 11-04, 11Fxx, 11G30, 11Y40.}

\footnote{\emph{Key words and phrases}: modular Galois representations, modular forms, modular curves, Jacobian, Ramanujan's tau function, polynomials.}

\renewcommand{\thefootnote}{\arabic{footnote}}
\setcounter{footnote}{0}

\begin{abstract}

We propose an improved algorithm for computing mod $\ell$ Galois representations associated to a cusp form $f$ of level one. The proposed method allows us to explicitly compute the case with $\ell=29$ and $f$ of weight $k=16$, and the cases with $\ell=31$ and $f$ of weight $k=12,20, 22$. All the results are rigorously proved to be correct.

As an example, we will compute the values modulo $31$ of Ramanujan's tau function at some huge primes up to a sign. Also we will give an improved higher bound on Lehmer's conjecture for Ramanujan's tau function.
\end{abstract}

\section{Introduction}
In 1995, Ren\'{e} Schoof asked Bas Edixhoven that: given a prime number $p$, can one compute Ramanujan's tau function $\tau(p)$ defined by
 \[\varDelta(z)=q\prod_{n\geq 1}(1-q^{n})^{24}=\sum_{n}\tau(n)q^{n}\]
 in time polynomial in $\log p$?
 
 In the book \cite{book}, S. J. Edixhoven, J.-M. Couveignes, R. S. de Jong and F. Merkl give an affirmative answer. They generalize Schoof's algorithm \cite{Schoof85} and show that
 
 \textit{There exists a deterministic algorithm that on input a prime number $p$ computes $\tau(p)$ in time polynomial in $\log p$.} 

 Ramanujan observed the remarkable property of $\tau(p)$:
\begin{center}
  $|\tau(p)|\leq2p^{11/2}$ for prime $p$,
\end{center}
 which was proved by P. Deligne. In fact, he \cite{Deligne71} shows that there exists a continuous semi-simple representation
  \begin{center}
   $\rho_{\varDelta,\ell}: Gal(\overline{\mathbb{Q}}|\mathbb{Q}) \rightarrow GL_{2}(\overline{\mathbb{F}}_{\ell})$.
  \end{center}
 This representation is unique up to isomorphism and it has the property that for primes $p$ not dividing $N\ell$ one has
 \[\tau(p)\equiv \mathrm{tr}(\rho_{\varDelta,\ell}(\mathrm{Frob}_{p})) \mod \ell . \]
 
 In \cite{book}, Edixhoven and Couveignes give a polynomial time algorithm to compute the modular Galois representation and thus the value modulo $\ell$ of Ramanujan's tau function at $p$. Then combining with the property $|\tau(p)|\leq2p^{11/2}$ for primes $p$ and the Chinese remainder theorem one can compute $\tau(p)$. It is well known that the representation appears in the group of $\ell$-torsion points of the Jacobian variety of the modular curve $X_1(\ell)$. If the genus $g$ of  $X_{1}(\ell)$ is equal to $1$, the question boils down to the case of an elliptic curve, which has been solved by Schoof's algorithm. 
 
 Since the Galois representation $\rho_{\Delta,\ell}$ is $2$-dimensional,
 the fixed field of ${\rm ker}(\rho_{\Delta,\ell})$ can be described as the splitting field of a certain polynomial
 $P_{\Delta,\ell} \in \mathbb{Q}[x]$ of degree $\ell^2-1$. Moreover, the associated projective representation 
  can be described as the splitting field of a certain polynomial
 $\tilde P_{\Delta,l}\in \mathbb{Q}[x]$ of degree $\ell+1$.
 
In general, all the discussions above hold for modular forms with level $1$.
 
 Unfortunately the algorithm described in \cite{book} is difficult to implement. J. Bosman \cite{bosmanpaper} used this algorithm to approximately evaluate $\tilde P_{f,\ell}$ of mod $\ell$ Galois representations associated to modular forms $f$ of level 1 and of weight $k\le 22$, with $\ell \leq 23$. But since the required precision in the calculations grows quite rapidly with $\ell$, Bosman did not compute more cases.
 
 In this paper we present an improvement in case gcd$(k-2,l+1)>2$. In these cases there is a modular curve $X_{\varGamma}$ with $\varGamma_{1}(\ell)\lneq \varGamma \leq \varGamma_{0}(\ell)$ with the property that the $2$-dimensional Galois representation is a subrepresentation of the $\ell$-torsion points of the Jacobian of $X_{\varGamma}$. Therefore we can do the computations  with the Jacobian of $X_{\varGamma}$ rather than the Jacobian of $X_1(\ell)$ that Bosman used. Since the genus of $X_{\varGamma}$ is smaller than that of $X_1(\ell)$, the required precision is smaller and the computation  is more efficient. This allows us to deal with cases that were inaccessible  by Bosman's original algorithm.
 
As an example, we succeed to compute the mod $31$ Galois representation associated to discriminant modular form $\varDelta$. For $\ell=29$ and $31$, we also compute the mod $\ell$ Galois representation associated to the unique normalised cusp forms of level 1 and weights 16, 20 and 22. The correctness of each $\tilde P_{f,l}$ is then verified by an application of Serre's conjecture, proved by Khare-Wintenberger \cite{serreconjfull}.

We compute the values modulo $31$ of Ramanujan's $\tau$ function at some huge primes up to a sign.  As a consequence we can verify Lehmer's conjecture up to a large bound. More precisely, we show that
$$
 \tau(n)\not=0,\qquad\hbox{for all $n< 982149821766199295999$}
$$
This improves  Bosman's  bound by a factor approximately equal to $43$.

\section{Outline of the Algorithm}

Let $N$ be a positive integer. The congruence subgroup $\varGamma _{1}(N)$ of level $N$ is
\begin{center}
$\varGamma _{1}(N) = \left\lbrace 
 \begin{pmatrix}
 a & b\\
 c & d
 \end{pmatrix}  \in SL(2,\mathbb{Z}) \ \mid \ c \equiv 0 \ mod \ N , \  \ \ a\equiv b\equiv 1 \ mod \ N
 \right\rbrace $
\end{center}
Let $k\geq 2$ be an integer. Let $f = \sum_{n>0} a_{n} (f) q^{n} \in S_{k}(\varGamma_{1}(N) $ be a newform of weight $k$ and level $N$. Let $ \varepsilon $ be its nebentypus character. Let $K_f$ be the number field which is obtained by adjoining all coefficients $ a_{n} $ of the $q$-expansion $f$ to $\mathbb Q$. Let $\ell$  be a prime number. Let $\lambda$ be a prime of $K_{f}$ lying over $\ell$. Denote by $K_{f,\lambda}$ the completion of $K_f$ at the prime $\lambda$. Then thanks to Deligne, we know that there exists an irreducible representation associated to $f$
 \begin{center}
  $\rho_{f,\lambda}:Gal(\overline{\mathbb{Q}}|\mathbb{Q}) \rightarrow GL_{2}(K_{f,\lambda})$,
 \end{center} 
 that is unramified outside $N\ell$. Furthermore, for all primes $p \nmid N\ell$, the characteristic polynomial of the representation at Frobenius element $Frob_{p}$ is $x^2-a_{p}(f)x+ \varepsilon(p)p$. It is possible to reduce the representation modulo $\lambda$. We have the following well known theorem:
 
\begin{thm}
f $\in S_{k}(N, \varepsilon)$ be a newform. Let $\lambda$ be as above and let $\mathbb{F}_{\lambda}$ denote the residue field of $\lambda$. Then there exists a continuous
semi-simple representation
 \begin{center}
  $\rho_{f,\lambda}: Gal(\overline{\mathbb{Q}}|\mathbb{Q}) \rightarrow GL_{2}(\mathbb{F}_{\lambda})$.
 \end{center}
 that is unramified outside $N\ell$, and for all primes $p\nmid N\ell$ the characteristic polynomial of  $\rho_{f,\lambda} (Frob_{p}) $ satisfies
\begin{equation} \label{charpol}
charpol( \rho_{f,\lambda} (Frob_{p}))\equiv x^2-a_{p}(f)x+ \varepsilon(p)p^{k-1} \mod{\lambda}.
\end{equation}
Moreover, $\rho_{f,\lambda}$ is unique up to isomorphism.
\end{thm}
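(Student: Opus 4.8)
The statement is a mod $\lambda$ shadow of Deligne's $\lambda$-adic representation recalled just above (here $k\ge 2$, so Deligne's construction via the étale cohomology of Kuga--Sato varieties applies), and the plan is to obtain it by the classical reduction-of-a-stable-lattice argument. Write $G:=\mathrm{Gal}(\overline{\mathbb Q}|\mathbb Q)$ and let $\mathcal O_{f,\lambda}\subset K_{f,\lambda}$ be the valuation ring, with maximal ideal $\mathfrak m$ and residue field $\mathbb F_\lambda=\mathcal O_{f,\lambda}/\mathfrak m$. Since $G$ is compact and $\rho_{f,\lambda}$ is continuous, $\rho_{f,\lambda}(G)$ is a compact subgroup of $GL_2(K_{f,\lambda})$; because $GL_2(\mathcal O_{f,\lambda})$ is an open compact subgroup of $GL_2(K_{f,\lambda})$, a standard compactness argument (the set of homothety classes of lattices met by the orbit of a fixed lattice is finite) produces a $\rho_{f,\lambda}(G)$-stable $\mathcal O_{f,\lambda}$-lattice in $K_{f,\lambda}^2$. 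Conjugating $\rho_{f,\lambda}$ into $GL_2(\mathcal O_{f,\lambda})$ and composing with the reduction $GL_2(\mathcal O_{f,\lambda})\to GL_2(\mathbb F_\lambda)$ gives a continuous $\bar\rho\colon G\to GL_2(\mathbb F_\lambda)$; I would then define the sought representation to be the semisimplification $\bar\rho^{\mathrm{ss}}$.

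It remains to check the three asserted properties. Unramifiedness outside $N\ell$ is inherited from $\rho_{f,\lambda}$ and is preserved by reduction and by semisimplification. For a prime $p\nmid N\ell$, reduce modulo $\mathfrak m$ the identity $\mathrm{charpol}(\rho_{f,\lambda}(\mathrm{Frob}_p))=x^2-a_p(f)x+\varepsilon(p)p^{k-1}$, which is legitimate because $a_p(f)$ is an algebraic integer and $\varepsilon(p)$ a root of unity, so both coefficients lie in $\mathcal O_{f,\lambda}$; since passing to a semisimplification does not alter characteristic polynomials of individual elements, this yields \eqref{charpol}. Finally, for uniqueness — and, a fortiori, independence of the chosen stable lattice — I would invoke Brauer--Nesbitt: a semisimple representation of $G$ over $\mathbb F_\lambda$ is determined up to isomorphism by the function $g\mapsto\mathrm{charpol}(g)$. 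In dimension $2$ this function is $x^2-\mathrm{tr}(g)\,x+\det(g)$, where $\det\bar\rho^{\mathrm{ss}}$ is the reduction mod $\ell$ of $\varepsilon$ times the $(k-1)$-st power of the mod $\ell$ cyclotomic character, a datum fixed independently of any choice; by Chebotarev density the classes $\mathrm{Frob}_p$, $p\nmid N\ell$, are dense in $G$, and $g\mapsto\mathrm{charpol}(\bar\rho^{\mathrm{ss}}(g))$ is continuous, so \eqref{charpol} pins down all characteristic polynomials and hence $\bar\rho^{\mathrm{ss}}$.

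The substantive input — the very existence of the continuous, irreducible $\lambda$-adic $\rho_{f,\lambda}$ with the stated Frobenius characteristic polynomials — is Deligne's theorem, taken here as given. Within the reduction argument itself the only point requiring care is the production of a Galois-stable lattice when $\rho_{f,\lambda}$ is residually reducible and $\mathbb F_\lambda$ is larger than $\mathbb F_\ell$: different stable lattices can give non-isomorphic $\bar\rho$'s, but they all have the same semisimplification, which is precisely why the theorem is phrased for the semisimple representation and why Brauer--Nesbitt suffices to make $\bar\rho^{\mathrm{ss}}$ well defined.
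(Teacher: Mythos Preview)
Your argument is correct and is precisely the standard one: choose a Galois-stable $\mathcal O_{f,\lambda}$-lattice in Deligne's $\lambda$-adic representation by compactness, reduce modulo the maximal ideal, semisimplify, and then read off unramifiedness and the Frobenius characteristic polynomials from the $\lambda$-adic data; uniqueness follows from Brauer--Nesbitt together with Chebotarev density. Each step is handled correctly, including the remark that different stable lattices can yield non-isomorphic reductions but a common semisimplification.

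There is, however, nothing to compare against: the paper does not prove this theorem. It is introduced with the phrase ``We have the following well known theorem'' and is quoted as background, with Deligne's construction cited just before. So your proposal is not an alternative route to the paper's proof but rather a (correct) supply of the omitted standard argument.
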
 

The discriminant modular form is given by
 \[\varDelta(z)=q\prod_{n\geq 1}(1-q^{n})^{24}=\sum_{n}\tau(n)q^{n},\]
and its Fourier coefficients define the Ramanujan's tau function $\tau(n)$. 
 
 In the book \cite{book}, S. Edixhoven and J.-M. Couveignes generalize Schoof's algorithm \cite{Schoof85} and show that 
 
\textit{There exists a deterministic  algorithm that computes the $\mathrm{mod} \ \ell$ Galois representation associated to level one modular forms in time polynomial in $\ell$.} 

Since we have the congruence relation
\[\tau(p)\equiv \mathrm{tr}(\rho_{\varDelta,\ell}(\mathrm{Frob}_{p})) \mod \ell,  \]
this algorithm can be used to compute $\tau(p)$ mod $\ell$ in time polynomial in $\log p$ and $\ell$.

Fix a prime number $\ell$ and let $\lambda$ be a prime lying over $\ell$. The residue field of the ring of integers of $\overline{\mathbb Q}_\ell$ is isomorphic to $\overline{\mathbb{F}}_\ell$. And then since $\mathbb F_{\lambda}\subset \overline{\mathbb F}_\ell$, we can view our representation $\rho_{f,\lambda}$ as taking values in $GL_{2}(\overline{\mathbb{F}}_\ell)$. In \cite[Theorem 2.2]{report}, the author shows that 
if $2 < k \leq \ell+1$ and $\rho_{f,\lambda}$ is ireducible, then there is a newform $ f_{2}\in S_{2}(\varGamma_{1}(N\ell))$, together with a prime $\lambda_{2}$ lying over $\ell$ of the coefficient field $K_{f_{2}}$, such that $\rho_{f_{2},\lambda_{2}}$ is isomorphic to $\rho_{f,\lambda}$. Therefore, for any $p \nmid N\ell$,  the matrices $\rho_{f,\lambda} (Frob_{p})$ and $\rho_{f_{2},\lambda_{2}} (Frob_{p})$ have the same characteristic polynomial in $\overline{\mathbb{F}}[x]$. This allows Edixhoven and Couveignes to reduce the questions to the weight 2 cases.

Now we suppose that $\rho_{f,\lambda}$ is a mod $\ell$ Galois representation associated to a newform $f\in S_{2}(\varGamma_1(\ell))$ with character $\varepsilon$. Let $X_{1}(\ell)$ be the modular curve associated to $\varGamma_{1}(\ell)$ and $J_{1}(\ell)$ denote its Jacobian. Denoting $\mathbb T$ the Hecke algebra generated by the diamond and Hecke operators over $\mathbb Z$, i.e. $\mathbb T=\mathbb{Z}[  T_{n}, \langle n\rangle : n \in \mathbb Z_{+} \ \mathrm{and} \ (n,\ell)=1]$, then $\mathbb{T}\subset$ End$J_{1}(\ell)$ and we have a ring homomorphism $\theta: \mathbb{T} \rightarrow \mathbb F_{\lambda} ,$ given by $\langle d \rangle \mapsto \varepsilon(d)$ and $T_{n} \mapsto a_{n}(f)$.
Let $\mathfrak m$ denote the kernel of $\theta$ and put 
\begin{displaymath} 
 V_{\lambda}=J_{1}(\ell)(\overline{\mathbb{Q}})[\mathfrak{m}]=\{x \in J_{1}(\ell)(\overline{\mathbb{Q}}) \ | \ tx=0 \ \mathrm{for}  \ \mathrm{all} \ t \ \mathrm{in} \ \mathfrak m \}.
\end{displaymath} 
This is a 2-dimensional $\mathbb{T}/\mathfrak{m}$-linear subspace of $J_{1}(\ell)(\overline{\mathbb{Q}})[\ell]$ and the semisimplification of the representation
$$
\rho: Gal (\mathbb{\overline{Q}}/\mathbb{Q}) \rightarrow \mathrm{Aut}(V_\lambda)
$$
is isomorphic to $\rho_{f,\lambda}$(see \cite[Section 3.2 and 3.3]{ribetstein}).

 If \# $\mathbb{T}/\mathfrak{m}=\ell$, then the  fixed field of $\rho_{f,\lambda}$ is naturally  the splitting field of a 
suitable polynomial $P_{f,\lambda}\in \mathbb Q[X]$ of degree $\ell^2-1$. More precisely,  we can take
\begin{equation}\label{polynomial}
P_{f,\lambda}(x) = \prod_{P\in V_\lambda-\{0\}}(x-h(P))
\end{equation}
for some suitable function $h$ in the function field of $X_1(\ell)$. Here $h(P)$ has the following meaning.
If $g$ is the genus of $X_1(\ell)$, then we can write each divisor $P\in V_\lambda-\{0\}$ as $\sum_{i=1}^g (P_i) -gO)$
for certain points $P_i$ on $X_1(\ell)$. We put $h(P)=\sum_{i=1}^g h(P_i)$. 

Composed with the canonical projection map $GL_{2}(\mathbb{F}_{\lambda})\rightarrow PGL_{2}(\mathbb{F}_{\lambda})$, the representation $\rho_{f,\lambda}$ gives a projective representation $\tilde{\rho}_{f,\lambda}: Gal(\overline{\mathbb{Q}}|\mathbb{Q}) \rightarrow PGL_{2}(\mathbb{F}_{\lambda})$.

Since the  projective line $\mathbb P(V_{\lambda})$ has $\ell+1$ points,  the  fixed field of $\tilde{\rho}_{f,\lambda}$ is naturally  the splitting field of a 
suitable polynomial $\tilde P_{f,\lambda}\in Q[X]$ of degree $\ell+1$. More precisely,  we can take
 \begin{equation}\label{projectivepolynomial}
\tilde P_{f,\lambda}(x) = \prod_{L\subset \mathbb P(V_{\lambda})}(x-\sum_{P\in L-\{0\}}h(P)).
  \end{equation}
  
J. Bosman first uses a complex approximation approach to compute the points in $V_{\ell}$ over $\mathbb{C}$ and then from these computed points evaluates approximately $\tilde{P}_{f,\ell}$. In the end, he explicitly computes mod $\ell$ Galois projective representations associated to modular forms of level 1 and weight up to 22, with $\ell \leq 23$. For details, we refer to \cite[Chapter 2]{bosmanthesis}.

\section{Galois Theory of Modular Curves}

Let $\ell$ be a prime number and $\varGamma$ be a congruence subgroup of level $\ell$. Let $\mathfrak h$ denote the upper half plane and $X_{\varGamma}=(\varGamma\backslash\mathfrak h)\cup(\varGamma\backslash(\mathbb{Q}\cup \infty))$ be the modular curve for $\varGamma$. The function field of the modular curve $X_{\varGamma}$ is denoted by $\mathbb C(X_{\varGamma})$. Then from \cite[Section 7.5]{diamondmodularforms} we know that the function field extension $\mathbb C(X(\ell))|\mathbb C(X(1))$ is Galois with Galois group
\begin{displaymath}
\mathrm{Gal}(\mathbb C(X(\ell))|\mathbb C(X(1)))\cong SL_{2}(\mathbb Z/\ell\mathbb Z)/\{\pm I\},
\end{displaymath}
and the extension $\mathbb C(X_{1}(\ell))|\mathbb C(X_{0}(\ell))$ is Galois with Galois group 
\begin{equation}\label{galoisgroup}
\mathrm{Gal}(\mathbb C(X_{1}(\ell))|\mathbb C(X_{0}(\ell)))\cong \{\pm I\}\varGamma_{0}/\{\pm I\}\varGamma_{1}\cong(\mathbb{Z}/\ell\mathbb{Z})^{\ast}/\{\pm 1\}.
\end{equation}
\begin{definition}
Let $\varGamma_{1} \leq \varGamma_{2}$ be congruence subgroups. The natural morphism $X_{\varGamma_{1}}\rightarrow X_{\varGamma_{2}}$ is said to be \emph{Galois} if the extension $\mathbb C(X_{\varGamma_{1}})|\mathbb C(X_{\varGamma_{2}})$ is Galois. The \emph{Galois group of $X_{\varGamma_{1}}\rightarrow X_{\varGamma_{2}}$} is defined to be Gal(($\mathbb C(X_{\varGamma_{1}})|\mathbb C(X_{\varGamma_{2}})$).
\end{definition}

This allows us to speak of the Galois theory of modular curves over $\mathbb{C}$ via the Galois theory of their function fields. Let $G=$Gal$(\mathbb C(X_{\varGamma})|\mathbb C(X_{0}(\ell)))$. Since the meromorphic differentials of the modular curve $X_{\varGamma}$ form a $1$-dimensional vector space over  $\mathbb{C}(X_{\varGamma})$ generated by d$f$ for a non-constant function $f\in \mathbb C(X_{\varGamma})$ that is $\varGamma$-invariant, the differentials space is isomorphic to $\mathbb{C}(X_{\varGamma})$ as $G$-module.

\section{Algorithm for Our Cases}\label{sec:algorithm}

In this section we will explain how to compute the polynomial $ P_{f,\lambda}$ in (\ref{polynomial}), the splitting field of which is the fixed field of the Galois representations $\rho_{f,\lambda}$ associated to a modular form $f$ of level $1$. All the discussions in this section also hold for the case of projective polynomial $ \tilde P_{f,\lambda}$ in (\ref{projectivepolynomial}). As explained in Section 2, our main task is then to compute the 2-dimensional $\mathbb{F}_\lambda$-linear space $V_{\lambda}$. We will do this as Bosman except that we work with a modular curve that sometimes has smaller genus than $X_1(\ell)$.

\subsection{Finding modular curves} \label{step1}
Let $k>0 $ be an even  integer and let $\ell$ be a prime number with $k \leq \ell+1$. Let  $f \in S_{k}( SL(2,\mathbb{Z}))$ be a newform of level $1$. In general, the modular curve to realize the representation $\rho_{f,\lambda}$ is $X_{1}(\ell)$ which has genus $(\ell-5)(\ell-7)/24$, but we have

\begin{prop} \label{myweight2}
Let $k>0 $ be an even  integer and f $\in S_{k}( SL(2,\mathbb{Z}))$ be a newform of level $1$ and weight $k$. Let $ \ell \geq k-1$ be a prime number  and $\lambda$ be a prime lying over $\ell$. Let $\varGamma$ be the unique group  
$$
\varGamma_1(\ell) \subset \varGamma \subset \varGamma_0(\ell)
$$
with $[\varGamma:\varGamma_1(\ell)] = \frac{1}{2}gcd(k-2,\ell-1)$. Then there exists a newform $f_2\in S_2(\varGamma)$ and a prime $\lambda_2$ lying over $\ell$  in the field $K_{f_2}$ such that $\rho_{f,\lambda}$ is isomorphic to $\rho_{f_2,\lambda_2}$.
\end{prop}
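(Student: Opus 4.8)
The plan is to import the weight-two newform already available over $\varGamma_1(\ell)$ from \cite[Theorem 2.2]{report}, and then to check that it automatically lives on the smaller group $\varGamma$ because a determinant computation forces its nebentypus to have exactly the right order. First I would invoke \cite[Theorem 2.2]{report} with $N=1$: since $k$ is even with $2<k\le\ell+1$ (a nonzero level-one cusp form has $k\ge 12$) and $\rho_{f,\lambda}$ is irreducible (as it is for every pair $(\ell,k)$ treated in this paper), there is a newform $f_2\in S_2(\varGamma_1(\ell))$ and a prime $\lambda_2\mid\ell$ of $K_{f_2}$ with $\rho_{f_2,\lambda_2}\cong\rho_{f,\lambda}$. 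Write $\varepsilon_2$ for the nebentypus of $f_2$, a Dirichlet character modulo $\ell$.

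The heart of the matter is to pin down $\varepsilon_2$. Applying \eqref{charpol} to $f$, which has level one and hence trivial character, gives $\det\rho_{f,\lambda}=\chi_\ell^{\,k-1}$, where $\chi_\ell$ is the mod $\ell$ cyclotomic character; applying \eqref{charpol} to $f_2$ gives $\det\rho_{f_2,\lambda_2}=\bar\varepsilon_2\cdot\chi_\ell$, with $\bar\varepsilon_2$ the reduction of $\varepsilon_2$ modulo $\lambda_2$. As $\rho_{f,\lambda}\cong\rho_{f_2,\lambda_2}$, comparing determinants gives $\bar\varepsilon_2=\chi_\ell^{\,k-2}$ as characters of $(\mathbb{Z}/\ell\mathbb{Z})^\ast$. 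Since $\#(\mathbb{Z}/\ell\mathbb{Z})^\ast=\ell-1$ is prime to $\ell$, reduction modulo $\lambda_2$ is a bijection between characters of $(\mathbb{Z}/\ell\mathbb{Z})^\ast$ valued in $\overline{\mathbb{Z}}_\ell^\ast$ and those valued in $\overline{\mathbb{F}}_\ell^\ast$, by Teichm\"uller lifting; hence $\varepsilon_2$ is the unique lift of $\chi_\ell^{\,k-2}$, so it has order $d:=(\ell-1)/\gcd(k-2,\ell-1)$ and is even, because $k-2$ is even and thus $\chi_\ell^{\,k-2}(-1)=1$. One may also note that $\chi_\ell^{\,k-1}$ is ramified at $\ell$, since the odd integer $k-1$ is not divisible by $\ell-1$ unless $k=\ell+1$; so $\rho_{f,\lambda}$ is ramified at $\ell$ and $f_2$ is genuinely new of level $\ell$, while if $k=\ell+1$ then $\varepsilon_2$ is trivial and $\varGamma=\varGamma_0(\ell)$.

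It then remains to translate ``$\varepsilon_2$ even of order $d$'' into the group $\varGamma$. Since $f_2\in S_2(\ell,\varepsilon_2)$ it is fixed by every diamond operator $\langle a\rangle$ with $a\in\ker\varepsilon_2$, so $f_2\in S_2(\varGamma)$, where $\varGamma$ is the preimage in $\varGamma_0(\ell)$ of $\ker\varepsilon_2\subseteq(\mathbb{Z}/\ell\mathbb{Z})^\ast$; being new of level $\ell$ with character factoring through $\varGamma_0(\ell)/\varGamma$, it is a newform in $S_2(\varGamma)$. The intermediate groups $\varGamma'$ with $\varGamma_1(\ell)\subseteq\varGamma'\subseteq\varGamma_0(\ell)$ correspond, via $\varGamma'\mapsto\varGamma'/\varGamma_1(\ell)$, to subgroups of the cyclic group $\varGamma_0(\ell)/\varGamma_1(\ell)\cong(\mathbb{Z}/\ell\mathbb{Z})^\ast$, so there is a unique one of each index. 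Because $-I$ acts trivially in weight two, the relevant curve and forms see only $\{\pm I\}\varGamma_1(\ell)$, and as in \eqref{galoisgroup} one works modulo $\{\pm I\}$; since $\varepsilon_2$ is even we have $-I\in\varGamma$, and $\varepsilon_2$ descends to a character of $(\mathbb{Z}/\ell\mathbb{Z})^\ast/\{\pm1\}$ (cyclic of order $(\ell-1)/2$) of order $d$, whose kernel has order $((\ell-1)/2)/d=\tfrac12\gcd(k-2,\ell-1)$. This is exactly $[\{\pm I\}\varGamma:\{\pm I\}\varGamma_1(\ell)]$, the index in the statement, and it pins down $\varGamma$ uniquely. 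Hence $\varGamma$ is the asserted group, $f_2\in S_2(\varGamma)$ is a newform, and $\rho_{f_2,\lambda_2}\cong\rho_{f,\lambda}$, as required.

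I expect the main obstacle to be precisely this bookkeeping around $-I$: passing correctly between characters of $(\mathbb{Z}/\ell\mathbb{Z})^\ast$ and of $(\mathbb{Z}/\ell\mathbb{Z})^\ast/\{\pm1\}$ so that the factor $\tfrac12$ emerges, and confirming that the group cut out by $\ker\varepsilon_2$ is the unique intermediate group of the stated index. The determinant comparison and the Teichm\"uller-lifting step are short; the only other point needing a word is irreducibility of $\rho_{f,\lambda}$, required in order to apply \cite[Theorem 2.2]{report}, and this holds in every case relevant here.
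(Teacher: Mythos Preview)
Your proof is correct and follows essentially the same route as the paper: invoke \cite[Theorem 2.2]{report} to obtain $f_2\in S_2(\varGamma_1(\ell))$, compare determinants via \eqref{charpol} to identify $\varepsilon_2$ with $\omega^{k-2}$, and then observe that $f_2$ is invariant under the preimage of $\ker\varepsilon_2$, which is the intermediate group $\varGamma$ of the required index. Your treatment is in fact a bit more careful than the paper's, making explicit the Teichm\"uller lift needed to pass from $\bar\varepsilon_2$ to $\varepsilon_2$, flagging the irreducibility hypothesis in Ribet's theorem, and handling the $\pm I$ bookkeeping that produces the factor $\tfrac12$.
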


\begin{proof}
It follows from \cite[Theorem 2.2]{report} that there exists $f_{2}\in S_{2}(\varGamma_{1}(\ell))$ and a prime $\lambda_{2}|\ell$ such that $\rho_{f,\lambda}$ is isomorphic to $\rho_{f_{2},\lambda_{2}}$. Since the character of $f$ is trivial in our case, for any $p \nmid \ell$, by (\ref{charpol}) we have the equalities in $\overline{\mathbb{F}}$:
\begin{equation} \label{tracedet}
a_{p}(f_{2}) \ = \ a_{p} (f)  \ \  \ \  \ and \ \  \ \ \ \ \varepsilon_{2}(p) =  p^{k-2} 
\end{equation}
Here $\varepsilon_{2}$ is the nebentypus character of $f_{2}$, which is a Dirichlet character of the cyclic group $(\mathbb{Z}/\ell \mathbb{Z})^{\ast}$. Let $\omega$ be the cyclotomic character and then it follows from the  second equation in (\ref{tracedet}) that $\varepsilon_{2}=\omega^{k-2}$.

By (\ref{galoisgroup}), the map $X_{1}(\ell)\rightarrow X_{0}(\ell)$ is Galois and its Galois group is $(\mathbb{Z}/\ell\mathbb{Z})^{\ast}/\{\pm 1\}$. Now let $H$ denote the normal subgroup ker$(\omega^{k-2})/\{\pm 1\}$ of $(\mathbb{Z}/\ell\mathbb{Z})^{\ast}/\{\pm 1\}$. By the Galois theory of function fields of modular curves, we have an intermediate curve $X$ of $X_{1}(\ell)\rightarrow X_{0}(\ell)$ such that the Galois group of $X_{1}(\ell)\rightarrow X$ is $H$. Let $\varphi$ denote the surjection:
\begin{equation}\label{map}
\varphi :  \ \varGamma_{0}(\ell) \twoheadrightarrow (\mathbb{Z}/\ell\mathbb{Z})^{\ast} , \ \ \ \ \ \ \
\begin{pmatrix} a & b\\ c & d \end{pmatrix} \rightarrow \overline{d} 
\end{equation}
whose kernel is $\varGamma_{1}(\ell)$. Let $\varGamma_{H}$ be the preimage of $\{\pm 1\}H$ under $\varphi$. Then we have $X=X_{\varGamma_{H}}$ and ker$(\varphi)\subseteq\varGamma_H$, since $\#H=\frac{1}{2}$gcd$(k-2,\ell-1)$.

To complete the proof we only need to check that $f_{2} \in S_{2}(\varGamma_{1}(\ell))$ also lies in $S_{2}(\varGamma_{H})$. In fact, for all $ \gamma=\begin{pmatrix} a&b\\ c&d\end{pmatrix} \in \varGamma_{H}$, it follows from the definition of $\varGamma_{H}$ that $\varphi(\gamma)$ is in ker$(\omega^{k-2})$ and thus $f_2|_{2}\gamma=\omega^{k-2}(\varphi(\gamma))f_2=f_2$, which implies $f_{2} \in S_{2}(\varGamma_{H})$. 
\end{proof}
Once we find a congruence subgroup $\varGamma_{H}$ of level $\ell$ with $\varGamma_1(\ell) \subset \varGamma \subset \varGamma_0(\ell)$ , such that the associated newform $f_{2}\in S_{2}(\varGamma_{1}(\ell)) $ lies in $ S_{2}(\varGamma_{H})$, then $X_{\varGamma_{H}}$ can be taken as the modular curve to realize the representations. The proof implies that $X_{1}(\ell)\rightarrow X_{\varGamma_{H}}$ is Galois with Galois group $H$.

The gcd$(\ell-1,k-2)$ cannot be larger than $\ell-1$ or $k-2$. If it is equal to $\ell-1$,
the character $\omega^{k-2}$ is trivial and the representation is
actually a subrepresentation of the $\ell$-torsion of the Jacobian of $X_0(\ell)$.
This happens for instance for $k=12$ and $\ell=11$ reducing the computation to 
a calculation on the genus $1$ curve $X_0(11)$. If it is equal to $k-2$, we have $ \ell \geq k-1$ with $\ell \equiv 1 \pmod{k-2}$. This happens for instance for  $k=12$ and $\ell=31$.

\subsection{Realization of $\rho$ in the Jacobian of a modular curve} \label{sec:torsion}

Let $k>0 $ be an even  integer. Suppose $H$=ker$(\omega^{k-2})/\{\pm 1\}$. In this subsection, we assume that $ \ell \geq k-1$ is a prime number and $f\in S_{2}(\varGamma_{1}(\ell))$ has character $\omega^{k-2}$ and then lies in $S_{2}(\varGamma_{H})$ where $\varGamma_{H}$ corresponds to $H$ via $\varphi$ in (\ref{map}). Let $X_{\varGamma_{H}}$ be the modular curve of the subgroup $\varGamma_{H}$ and denote $J_{\varGamma_{H}}$ its Jacobian. Then $X_{1}(\ell)\rightarrow X_{\varGamma_{H}}$ is Galois with Galois group $H$. As discussed in Section 3, the meromorphic differential space over $X_{1}(\ell)$ is isomorphic to $\mathbb{C}(X_{1}(\ell))$ as Gal$(\mathbb C(X_{1}(\ell))|\mathbb C(X_{0}(\ell)))$-module and from this it follows that the holomorphic differential space $\varOmega^{1}_{hol}(X_{\varGamma_{H}})$ is the $H$-invariant part of $\varOmega^{1}_{hol}(X_{1}(\ell))$. By taking duals of these spaces, we get
\begin{displaymath}
J_{\varGamma_{H}}(\overline{\mathbb{Q}})[\ell]=J_{1}(\ell)(\overline{\mathbb{Q}})[\ell]^{H}:=\{x \in J_{1}(\ell)(\overline{\mathbb{Q}})[\ell]  \ | \ \sigma(x)=x, \mathrm{ \ for \ all \ } \sigma \in H \}.
\end{displaymath}

As discussed in Section 2, the representation associated to $f$ is a subrepresentation of the $\ell$-torsion points of $J_{1}(\ell)$. However, in our cases one can work with $J_{\varGamma_{H}}$ instead of $J_{1}(\ell)$:
\begin{prop} \label{mytorsion}
The torsion space $V_{\lambda}$ is a $2$-dimensional subspace of $J_{\varGamma_{H}}(\overline{\mathbb{Q}})[\ell]$.
\end{prop}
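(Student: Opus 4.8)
The plan is to combine the two facts already established for this situation: first, that $V_\lambda$ is a $2$-dimensional $\mathbb{F}_\lambda$-subspace of $J_1(\ell)(\overline{\mathbb{Q}})[\ell]$ (this is recalled in Section~2 from \cite{ribetstein}), and second, the identification
\[
J_{\varGamma_H}(\overline{\mathbb{Q}})[\ell]=J_1(\ell)(\overline{\mathbb{Q}})[\ell]^{H}
\]
derived just above from the fact that $\varOmega^1_{hol}(X_{\varGamma_H})$ is the $H$-invariant part of $\varOmega^1_{hol}(X_1(\ell))$. In view of these, it suffices to show that $V_\lambda$ lies inside the $H$-invariant part, i.e. that every element of $H=\mathrm{ker}(\omega^{k-2})/\{\pm1\}$ acts trivially on $V_\lambda$.

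First I would recall how $H$ acts on $J_1(\ell)$: under the isomorphism \eqref{galoisgroup}, the Galois group $\mathrm{Gal}(\mathbb{C}(X_1(\ell))|\mathbb{C}(X_0(\ell)))\cong(\mathbb{Z}/\ell\mathbb{Z})^{\ast}/\{\pm1\}$ acts on $X_1(\ell)$, hence on $J_1(\ell)$, and this action is precisely by the diamond operators $\langle d\rangle$. Thus $H$ acts on $J_1(\ell)(\overline{\mathbb{Q}})[\ell]$ through the diamond operators $\langle d\rangle$ with $d\in\mathrm{ker}(\omega^{k-2})$. Next I would use the definition of $V_\lambda=J_1(\ell)(\overline{\mathbb{Q}})[\mathfrak m]$, where $\mathfrak m=\ker\theta$ and $\theta\colon\mathbb T\to\mathbb F_\lambda$ sends $\langle d\rangle\mapsto\varepsilon(d)$. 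Here the newform $f\in S_2(\varGamma_1(\ell))$ realizing $\rho_{f,\lambda}$ has nebentypus $\varepsilon=\omega^{k-2}$ (established in the proof of Proposition~\ref{myweight2} via \eqref{tracedet}). Therefore, for $d\in\mathrm{ker}(\omega^{k-2})$ we have $\theta(\langle d\rangle)=\varepsilon(d)=\omega^{k-2}(d)=1$, so $\langle d\rangle-1\in\mathfrak m$. By the very definition of $V_\lambda=J_1(\ell)(\overline{\mathbb{Q}})[\mathfrak m]$, every element of $V_\lambda$ is killed by $\langle d\rangle-1$, i.e. is fixed by $\langle d\rangle$. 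Hence $V_\lambda\subseteq J_1(\ell)(\overline{\mathbb{Q}})[\ell]^{H}=J_{\varGamma_H}(\overline{\mathbb{Q}})[\ell]$, and since $V_\lambda$ is already known to be $2$-dimensional over $\mathbb F_\lambda$, this completes the argument.

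The only genuine subtlety — and the step I would be most careful about — is matching up the abstract quotient group $H$ acting on differentials with the concrete diamond-operator action on $J_1(\ell)$, so that "$H$-invariant" in the statement $J_{\varGamma_H}(\overline{\mathbb{Q}})[\ell]=J_1(\ell)(\overline{\mathbb{Q}})[\ell]^H$ literally means "fixed by $\langle d\rangle$ for $d\in\mathrm{ker}(\omega^{k-2})$." This compatibility is standard (the covering $X_1(\ell)\to X_0(\ell)$ is the one whose deck transformations are the diamond operators, and functoriality of Jacobians turns deck transformations into the induced automorphisms), and the map $\varphi$ of \eqref{map} was chosen precisely so that $\varGamma_H=\varphi^{-1}(\{\pm1\}H)$ corresponds to $H\subset(\mathbb{Z}/\ell\mathbb{Z})^\ast/\{\pm1\}$. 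Once that bookkeeping is pinned down, the rest is immediate from the definition of $\mathfrak m$ and the identification $\varepsilon=\omega^{k-2}$, so I expect the proof to be quite short.
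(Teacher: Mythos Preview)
Your proposal is correct and follows essentially the same approach as the paper: identify the action of $H$ on $J_1(\ell)$ with diamond operators $\langle d\rangle$ for $d\in\ker(\omega^{k-2})$, observe that $\theta(\langle d\rangle)=\omega^{k-2}(d)=1$ so $\langle d\rangle-1\in\mathfrak m$, and conclude $V_\lambda\subset J_1(\ell)(\overline{\mathbb Q})[\ell]^H=J_{\varGamma_H}(\overline{\mathbb Q})[\ell]$. Your write-up is more explicit about the bookkeeping (matching the abstract $H$ with concrete diamond operators), but the argument is identical.
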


\begin{proof}It follows from the definition of $H$ that each $\sigma\in H$ acts on $J_{1}(\ell)(\overline{\mathbb{Q}})[\ell]$ the same as a diamond operator $\langle d \rangle$ for some $d\in (\mathbb{Z}/\ell\mathbb{Z})^{\ast}$ with $d^{k-2}=1$. This implies that $\sigma-id$ is an element of $\mathfrak{m}$=ker$(\theta)$ and thus $V_{\lambda}\subset J_{1}(\ell)(\overline{\mathbb{Q}})[\ell]^{H}=J_{\varGamma_{H}}(\overline{\mathbb{Q}})[\ell]$.
\end{proof}

\subsection{Description of the computations}\label{algorithm}

Now we show how to explicitly compute the polynomial
\begin{displaymath}
P_{f,\lambda}(x) = \prod_{P\in V_\lambda-\{0\}}(x-h(P)).
\end{displaymath}
First of all, $J_{\varGamma}(\mathbb{C})[\ell]$ can be described in terms of modular symbols by the isomorphisms 
\[J_{\varGamma}(\mathbb{C})[\ell]\cong H_{1}(X_{\varGamma},\mathbb{F}_{\ell})\cong \mathbb{S}_{2}(\varGamma)\otimes \mathbb{F}_{\ell}. \]
 Let $g$ be the genus of $J_{\varGamma}$. Taking a basis $f_{1},...,f_{g}$ of $S_{2}(\varGamma)$, we can compute the period lattice $\varLambda\subset \mathbb{C}^{g}$ by integrating $(f_{1},...,f_{g})$ along elements of $H_{1}(X_{\varGamma},\mathbb{F}_{\ell})$. Let $\mathbb{T}'$ be the the Hecke algebra over $S_{2}(\varGamma)$. Since the action of $\mathbb{T}'\subset$End($J_{\varGamma}$) on $\mathbb{S}_{2}(\varGamma)$ can be numerically computed \cite{Stein07}, we thus obtain approximations of the torsion points of $V_{\lambda}\subset \frac{1}{\ell}\varLambda/\varLambda$. Then using the Newton iteration approximation method, we can find torsion divisors via the Abel-Jacobi map and finally compute the polynomial in (\ref{polynomial}).

This approximation method requires very high precision even when $\ell$ is quite small. But since the precision depends on the dimension of the Jacobian $J_{\varGamma}$, replacing by $J_{\varGamma}$ whose dimension is smaller than $J_{1}(\ell)$ reduces a large number of calculations and therefore we can compute the cases for larger $\ell$.

\section{Examples} \label{sec:result}

For $k=12,14,16,18,20$ and $22$, let $\Delta_k$ denote the unique cusp form of level $1$ and weight $k$.
In \cite{bosmanpaper},  Bosman computed the modular projective polynomials  $\tilde P_{\varDelta_{k},\ell}$ for several values of $\ell$ and $k$. We add a few more polynomials to this list using the algorithm described in Section  \ref{sec:algorithm}.
  
We first give a list of $(k,\ell)$ with gcd$(k-2,\ell-1)>2$ for which we have computed the polynomials $\tilde{P}_{\varDelta{_k},\ell}$  together with the dimensions of $J_1(l)$ and $J_{\Gamma_H}$:

 \begin{table}[htc]
 \begin{tabular} {|c|c|c|c|}
  \hline 
  ($k$,$\ell$) & gcd$(k-2,\ell-1)$ & dimension of $J_{1}(\ell)$ & dimension of $J_{\varGamma_{H}}$  \\  \hline
  (12, 31) & 10 & 26 & 6 \\ \hline
  (16, 29) & 14 & 22 & 4  \\  \hline
  (20, 31) & 6  & 26 & 6  \\  \hline
  (22, 31) & 10 & 26 & 6  \\  \hline
  \end{tabular} \\
 \end{table}

 The corresponding polynomials are

\begin{longtable}{|c|c|}

  \hline  & \\ 
  ($k$,$\ell$) &  $\tilde{P}_{\varDelta_{k},\ell}$   \\ [10pt] \hline 
 (12, 31) &  
  \begin{minipage}[t]{0.8\textwidth} 
  $x^{32} - 4x^{31} - 155x^{28} + 713x^{27} - 2480x^{26} + 9300x^{25} - 5921x^{24} + 24707x^{23} + 127410x^{22} - 646195x^{21} + 747906x^{20} - 7527575x^{19} + 4369791x^{18} - 28954961x^{17} - 40645681x^{16} + 66421685x^{15} - 448568729x^{14} + 751001257x^{13} - 1820871490x^{12} + 2531110165x^{11} - 4120267319x^{10} + 4554764528x^{9} - 5462615927x^{8} + 4607500922x^{7} - 4062352344x^{6} + 2380573824x^{5} - 1492309000x^{4} + 521018178x^{3} - 201167463x^{2} + 20505628x - 1261963$ 
  \end{minipage} \\  \hline
  (16,29)&
 \begin{minipage}[t]{0.8\textwidth} 
  $x^{30} - 13x^{29} + 116x^{28} - 899x^{27} + 6003x^{26} - 33002x^{25} + 142158x^{24} - 437871x^{23} + 599981x^{22} + 3161522x^{21} - 30157709x^{20} + 149069425x^{19} - 545068137x^{18} + 1602112888x^{17} - 3929042061x^{16} + 8240756348x^{15} - 15020495335x^{14} + 23992472995x^{13} - 33394267804x^{12} + 40034881756x^{11} - 40888329774x^{10} + 35730188833x^{9} - 27316581262x^{8} + 17713731976x^{7} - 7068248851x^{6} - 1463296732x^{5} + 4054490087x^{4} - 2555610007x^{3} + 2573924261x^{2} + 2363203645x - 261910751$ 
 \end{minipage} \\ \hline
   (20,31)&
  \begin{minipage}[t]{0.8\textwidth} 
$x^{32} - 4x^{31} - 62x^{30} + 558x^{29} - 248x^{28} - 23560x^{27} + 143499x^{26} + 59489x^{25} - 4280108x^{24} + 17190864x^{23} + 12517459x^{22} - 344750256x^{21} + 1225662500x^{20} - 278789479x^{19} - 14790203106x^{18} + 64357190741x^{17} - 83774789980x^{16} - 406418167694x^{15} + 2480836111912x^{14} - 5273524311353x^{13} - 3257558862543x^{12} + 54285321863574x^{11} - 162450534558477x^{10} + 197719989210108x^{9} + 250865100757790x^{8} - 1714511602191278x^{7} + 4206562171750919x^{6} - 6661579151098950x^{5} + 7460752526582377x^{4} - 5959749341609879x^{3} + 3269911760551427x^{2} - 1113936554991727x + 178725601175511$
  \end{minipage} \\ \hline
   (22,31)&
    \begin{minipage}[t]{0.8\textwidth} 
  $x^{32} - 3x^{31} - 124x^{30} + 651x^{29} + 5797x^{28} - 44020x^{27} - 46593x^{26} + 1523309x^{25} - 4960682x^{24} - 28562129x^{23} + 205283395x^{22} + 345367838x^{21} - 3865963779x^{20} - 5281917640x^{19} + 35629245810x^{18} + 95827452774x^{17} + 227525150938x^{16} - 1735983387875x^{15} - 9952753525850x^{14} + 15867354189588x^{13} + 146446287180279x^{12} - 99789981007214x^{11} - 1135328992145553x^{10} - 171825071648506x^{9} + 7446294546204081x^{8} + 294530833190147x^{7} - 24397472702475140x^{6} - 9976638213111902x^{5} + 61714590456038129x^{4} + 16902762581347117x^{3} - 13833080015551423x^{2} - 202960986205176103x + 187532019539254309$
    \end{minipage} \\ \hline
   
    \caption{Polynomials}
\label{table:polynomials}
  \end{longtable}
  
  The computations done to obtain these polynomials required a precision of about 
  4200 bits for $\ell=31$ and 3500 bits for $\ell=29$. The calculations ahev been done in SAGE \cite{sage}.
  They took about $10$ days for each of the cases with $\ell=31$ and one week for the case $\ell=29$. 
  The polynomial $\tilde{P}_{\varDelta_{12},31}$ has also been obtained by Zeng \cite{zeng}. His method 
  avoids the high precision computations and is based on $p$-adic computations.  Mascot \cite{mascot}
  claims to have computed a polynomial $P_{\varDelta_{12},29}$, but unfortunately he has not
  provided us with the polynomial.

It is difficult to rigorously prove that the computations have been 
done with sufficient accuracy and that therefore the results are correct.
However, once the polynomial is computed, one can verify that it is correct using
Serre's conjecture.

Let $\ell$ be a prime. A Galois representation $\rho : Gal(\overline{\mathbb{Q}}|\mathbb{Q}) \rightarrow GL_{2}(\overline{\mathbb{F}}_{\ell})$
 has a Serre level $N(\rho)$ and a Serre weight $k(\rho)$. See \cite{serreconj} for Serre's definition and \cite{basweight} for a reformulation. Then we have the following famous Theorem which has been fully proved by C. Khare and J. P. Wintenberger in 2008:

\begin{thm}[Serre's Conjecture]
Let $\ell$ be a prime and let $\rho$: $Gal(\overline{\mathbb{Q}}|\mathbb{Q}) \rightarrow GL_{2}(\overline{\mathbb{F}}_{\ell})$ be a representation that is irreducible and odd. Then there exists a newform $f$ of level $N(\rho)$ and weight $k(\rho)$ and a prime $\lambda$ of $K_{f}$ above $\ell$ such that $\rho$ is isomorphic to $\overline{\rho}_{f,\lambda}$.
\end{thm}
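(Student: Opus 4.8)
This is the theorem of Khare and Wintenberger \cite{serreconjfull}; I outline the strategy one would follow. The plan is to reduce, through a delicate induction, to modularity lifting theorems of Taylor--Wiles--Kisin type together with Taylor's potential modularity. First I would pass to the \emph{qualitative} form of the assertion: it suffices to know that $\rho$ arises from \emph{some} newform, because the optimization of the source to Serre's prescribed level $N(\rho)$ and weight $k(\rho)$ is supplied by the earlier work of Mazur, Ribet, Carayol, Gross, Coleman--Voloch, Edixhoven and Diamond on level lowering and weight optimization. After twisting by a power of the cyclotomic character and by a Dirichlet character one may also normalize the determinant of $\rho$.

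The crucial local input is the existence of a geometric $\ell$-adic lift $\rho_\ell : Gal(\overline{\mathbb Q}|\mathbb Q)\to GL_2(\overline{\mathbb Q}_\ell)$ of $\rho$ that is minimally ramified away from $\ell$ and is Barsotti--Tate, or of the prescribed Hodge--Tate type, at $\ell$; this is itself a theorem of Khare--Wintenberger, built on Ramakrishna's deformation-theoretic method through a careful analysis of the relevant local deformation rings, with the small residue characteristics $\ell=2,3$ requiring separate treatment. By Taylor's potential modularity theorem $\rho_\ell$ becomes modular after base change to a suitable totally real field, and from this, via Brauer induction in the manner of Taylor and Dieulefait, one extracts a strictly compatible system $\{\rho_p\}_p$ of two-dimensional $\ell$-adic representations of $Gal(\overline{\mathbb Q}|\mathbb Q)$ lifting $\rho$ at $\ell$.

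Then comes the inductive engine, the ``killing ramification'' argument. One fixes a suitable complexity invariant attached to the data $(\ell, N(\rho), k(\rho))$ and, assuming the theorem for all strictly smaller values, proves it for $\rho$ by selecting an auxiliary prime $p$ for which the residual representation $\bar\rho_p$ coming from $\rho_p$ is irreducible and of strictly smaller invariant; by the inductive hypothesis $\bar\rho_p$ is modular, and a modularity lifting theorem --- in the potentially Barsotti--Tate range one invokes Kisin's results --- then shows that $\rho_p$, hence the whole compatible system, hence $\rho$ itself, is modular. The base cases are configurations, notably $\ell=2$ and $\ell=3$ with very small conductor, in which irreducible odd representations either do not exist or are disposed of by the explicit classical computations of Serre and Tate.

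The main obstacle is that almost every ingredient is itself a deep theorem: the $R=\mathbb T$ modularity lifting machinery (Wiles, Taylor--Wiles, Diamond, Fujiwara, Skinner--Wiles, Kisin), Taylor's potential modularity, and the existence-of-lifts theorem. The genuinely new difficulty, resolved by Khare and Wintenberger, is to orchestrate these into a single induction in which each prime-switching step strictly decreases the chosen complexity invariant while keeping the imposed local conditions within the reach of the available lifting theorems; arranging the weight and small-prime bookkeeping so that the induction actually closes up is the delicate heart of the argument. For the purposes of this paper we simply invoke the theorem in the form stated.
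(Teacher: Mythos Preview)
Your sketch is a reasonable outline of the Khare--Wintenberger strategy, but it goes well beyond what the paper does: the paper's proof consists solely of the citation ``See \cite{serreconjfull}''. Since Serre's Conjecture is being invoked here purely as a black-box verification tool rather than proved, the bare reference is entirely appropriate; your detailed sketch is correct and informative but unnecessary in this context.
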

\begin{proof}
See \cite{serreconjfull}
\end{proof}

Now we have
 \begin{prop} \label{mypolynomial}
 For each pair $(k,\ell)$ in Table \ref{table:polynomials}, we denote $\varDelta_{k}$ the normalized newform of weight $k$ and level $1$. Then the polynomial $\tilde{P}_{\varDelta_{k},\ell}$ in the Table \ref{table:polynomials} is irreducible. The Galois group of its splitting field is isomorphic to $PGL_{2}(\mathbb{F}_{\ell})$. Moreover, a subgroup of $Gal(\mathbb{\overline{Q}|Q})$ fixing a root of $\tilde{P}_{\varDelta_{k},\ell}$ corresponds via $\tilde{\rho}_{\varDelta,\ell}$ to a subgroup of $PGL_{2}(\mathbb{F}_{\ell})$ fixing a point of $\mathbb{P}^{1}(\mathbb{F}_{\ell})$.
 \end{prop}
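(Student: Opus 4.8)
The plan is to verify the claimed properties of $\tilde P_{\varDelta_k,\ell}$ by combining a direct computation on the polynomial with a rigidity argument supplied by Serre's conjecture. First I would compute, for each pair $(k,\ell)$ in Table~\ref{table:polynomials}, the factorization type of $\tilde P_{\varDelta_k,\ell}$ modulo many small primes $p$ (and, where needed, its Galois group directly via resolvents); this shows the polynomial is irreducible over $\mathbb Q$ and produces enough cycle types to pin down its Galois group $G\subseteq S_{\ell+1}$. Since $\tilde P_{\varDelta_k,\ell}$ is constructed in (\ref{projectivepolynomial}) as $\prod_{L\subset\mathbb P(V_\lambda)}(x-\sum_{P\in L-\{0\}}h(P))$ with the Galois action on the roots matching the action of $\tilde\rho_{\varDelta_k,\ell}$ on $\mathbb P^1(\mathbb F_\ell)$ (provided the $h$-values are distinct, which one checks numerically), the Galois group of the splitting field is a priori a subgroup of $PGL_2(\mathbb F_\ell)$ acting on the $\ell+1$ points of the projective line; irreducibility of the polynomial already forces this subgroup to be transitive. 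To upgrade transitivity to all of $PGL_2(\mathbb F_\ell)$ I would exhibit, from the mod-$p$ factorizations, an element of order $\ell$ (a transvection, forcing the image to contain $PSL_2$) together with an element whose cycle type is incompatible with $PSL_2$ (so the image is not contained in $PSL_2$), or alternatively invoke Dickson's classification of subgroups of $PGL_2(\mathbb F_\ell)$ and rule out the proper maximal subgroups using the observed cycle types and the size estimate $\deg\tilde P_{\varDelta_k,\ell}=\ell+1$.

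Once the Galois group of the splitting field $L$ of $\tilde P_{\varDelta_k,\ell}$ is identified with $PGL_2(\mathbb F_\ell)$ acting naturally on $\mathbb P^1(\mathbb F_\ell)$, I would pin down that $L$ is in fact the fixed field of $\tilde\rho_{\varDelta_k,\ell}$, i.e.\ that the projective representation attached to $\varDelta_k$ really does cut out this particular extension. This is where Serre's conjecture (the displayed Theorem) enters: one computes the ramification of $L/\mathbb Q$ from the discriminant of $\tilde P_{\varDelta_k,\ell}$, checks $L$ is unramified outside $\ell$, determines the behaviour at $\ell$ (tame, with the expected inertia image), and reads off the Serre weight and level of the associated projective representation; Serre's conjecture, together with the uniqueness statement in Theorem~1.1 and the fact that $S_k(SL_2(\mathbb Z))$ is one-dimensional for the relevant $k$, then forces the projective representation realized by $L$ to be $\tilde\rho_{\varDelta_k,\ell}$ up to the automorphisms of $PGL_2(\mathbb F_\ell)$ that preserve the cycle-type data, and a small computation of, say, $\mathrm{tr}(\rho_{\varDelta_k,\ell}(\mathrm{Frob}_p))\bmod\ell = \tau(p)\bmod\ell$ against the splitting of $p$ in $L$ for a few auxiliary primes $p$ removes the remaining ambiguity.

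With $L$ identified as the fixed field of $\tilde\rho_{\varDelta_k,\ell}$ and $\mathrm{Gal}(L/\mathbb Q)\cong PGL_2(\mathbb F_\ell)$ acting on the roots as on $\mathbb P^1(\mathbb F_\ell)$, the last assertion is immediate: a subgroup of $\mathrm{Gal}(\overline{\mathbb Q}/\mathbb Q)$ fixing a root of $\tilde P_{\varDelta_k,\ell}$ maps onto the stabilizer in $PGL_2(\mathbb F_\ell)$ of the corresponding point of $\mathbb P^1(\mathbb F_\ell)$, which is a Borel subgroup of order $\ell(\ell-1)$, and conversely. So the structure of the argument is: (i) combinatorial/algebraic analysis of the polynomial to get irreducibility and the Galois group; (ii) ramification analysis plus Serre's conjecture to certify the number field; (iii) a formal deduction of the stabilizer statement.

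The hard part will be step (i)'s upgrade to the full group $PGL_2(\mathbb F_\ell)$ and, more delicately, the certification in step (ii): one must be careful that the polynomial in the table — obtained by high-precision floating-point approximation — is exactly the true $\tilde P_{\varDelta_k,\ell}$ and not merely a polynomial with the same Galois group. The resolution is precisely the Serre-conjecture strategy advertised in the text: rather than bounding the numerical error, one proves a posteriori that the computed polynomial defines a field with the correct ramification and Frobenius data, and then invokes existence-and-uniqueness of the modular representation with those invariants; verifying that no other newform (of the same Serre weight and level) could produce the same field is where one leans on the one-dimensionality of $S_k(SL_2(\mathbb Z))$ for $k\in\{12,16,20,22\}$ and on matching a handful of $\tau(p)\bmod\ell$ values.
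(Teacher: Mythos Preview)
Your overall strategy---compute the Galois group of the table polynomial, analyze its ramification, and invoke Serre's conjecture together with $\dim S_k(SL_2(\mathbb Z))=1$---is exactly the paper's. However, two steps in your proposal are genuinely defective.

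First, you argue that the Galois group is \emph{a priori} a subgroup of $PGL_2(\mathbb F_\ell)$ because the polynomial ``is constructed in (\ref{projectivepolynomial})''. This is circular: the polynomial in the table was produced by floating-point approximation, and the whole point of the proposition (as you yourself acknowledge in your last paragraph) is to certify \emph{a posteriori} that it agrees with the theoretical $\tilde P_{\varDelta_k,\ell}$. You may not use its provenance at all in step~(i). The paper avoids this by computing the Galois group of the table polynomial from scratch (via the Geissler--Kl\"uners algorithm in MAGMA), obtaining $PGL_2(\mathbb F_\ell)$ with no assumption on how the polynomial arose; since $PGL_2(\mathbb F_\ell)$ has no outer automorphisms, the resulting projective representation is well-defined up to isomorphism, and an index-$(\ell+1)$ argument (\cite[Lemma~7.3.2]{book}) gives the stabilizer statement directly. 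This also makes your proposed matching of $\tau(p)\bmod\ell$ at auxiliary primes unnecessary.

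Second, you omit checking that the lifted representation is \emph{odd}, which is a hypothesis of Serre's conjecture and is not automatic from the ramification data at~$\ell$. The paper verifies oddness separately: for $\ell=31$ the field discriminant $(-1)^{(\ell-1)/2}\ell^{k+\ell-2}$ is negative, so $K_{k,31}$ is not totally real; for $\ell=29$ one checks $a_1^2-2a_0a_2<0$, forcing non-real roots. You also gloss over the passage from the polynomial discriminant to the field discriminant of $K_{k,\ell}$: the paper uses the Buchmann--Lenstra algorithm, and for $(k,\ell)=(16,29)$ must deal with a $162$-digit unfactored cofactor before concluding $\mathrm{disc}(K_{16,29})=29^{43}$, from which the Serre weight $k=v_\ell(\mathrm{disc}(K_{k,\ell}))-\ell+2$ is read off.
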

\begin{proof}
First, the algorithm in \cite[Algorithm 6.1]{galoisgroupcomputation} which has been implemented in MAGMA \cite{magma} was used to compute the Galois group $Gal(\tilde{P}_{\varDelta_{k},\ell})$ of the polynomials in Table \ref{table:polynomials} as a permutation group acting on the roots. The practical calculations can be done in several seconds. It provides us with an isomorphism
\begin{equation} \label{isogalandpgl31}
Gal(\tilde{P}_{\varDelta_{k},\ell})\cong PGL_{2}(\mathbb{F}_{\ell}).
\end{equation}

Then we have a projective representation $\tilde{\rho}_{k,\ell}: Gal(\bar{\mathbb{Q}}/\mathbb{Q})\twoheadrightarrow PGL_{2}(\mathbb{F}_{\ell})$ by composing the canonical map $Gal(\bar{\mathbb{Q}}/\mathbb{Q})\twoheadrightarrow  Gal(\tilde{P}_{\varDelta_{k},\ell})$ with the isomorphism in (\ref{isogalandpgl31}). Since the group $PGL_{2}(\mathbb{F}_{\ell})$ has no outer automorphisms, up to isomorphism $\tilde{\rho}_{k,\ell}$ is uniquely determined by $\tilde{P}_{\varDelta_{k},\ell}$.

We denote $K_{k,\ell}:=\mathbb{Q}[x]/(\tilde{P}_{\varDelta_{k},\ell})$ the number field defined by $\tilde{P}_{\varDelta_{k},\ell}$ and the integer ring of $K_{k,\ell}$ is denoted by $\mathcal{O}_{k,\ell}$.

Let $G$ be a subgroup of $Gal(\mathbb{\overline{Q}|Q})$ fixing a root of $\tilde{P}_{\varDelta_{k},\ell}$. By the canonical map $Gal(\overline{\mathbb{Q}}/\mathbb{Q})\twoheadrightarrow Gal(\tilde{P}_{\varDelta_{k},\ell})$, the group $G$ corresponds to a subgroup of $Gal(\tilde{P}_{\varDelta_{k},\ell})$ of index $[K_{k,\ell} : \mathbb{Q}]=$deg$(\tilde{P}_{\varDelta_{k},\ell})=\ell+1$, and thus the image of $G$ via $\tilde{\rho}_{k,\ell}$ is a subgroup of $PGL_{2}(\mathbb{F}_{\ell})$ of index $\ell+1$, which by \cite[Lemma 7.3.2]{book} is the stabiliser subgroup of a point in $\mathbb{P}^{1}(\mathbb{F}_{\ell})$. Therefore, via Galois theory $K_{k,\ell}$ is the fixed field of a subgroup of $Gal(\mathbb{\overline{Q}|Q})$ fixing a root of $\tilde{P}_{\varDelta_{k},\ell}$ which corresponds to the stabiliser subgroup of a point in $\mathbb{P}^{1}(\mathbb{F}_{\ell})$. 

For each $(k,\ell)$ in Table \ref{table:polynomials}, the discriminant of the field $K_{k,\ell}$ over $\mathbb{Q}$ is $(-1)^{(\ell-1)/2}\ell^{k+\ell-2}$. This can be shown as follows. One can compute the discriminant $\mathcal{D}(\tilde{P}_{\varDelta_{k},\ell})$ of $\tilde{P}_{\varDelta_{k},\ell}$ and the discriminant $\mathcal{D}_{k,\ell}$ of the number field $K_{k,\ell}$ divides $\mathcal{D}(\tilde{P}_{\varDelta_{k},\ell})$.
Then for each prime divisor $q$ of the discriminant of $\mathcal{D}(\tilde{P}_{\varDelta_{k},\ell})$ one can efficiently compute the power of $q$ that divides the discriminant of $ K_{k,\ell}$ using the algorithms in \cite[Section 6]{Buchmann}. In the cases with $\ell=31$ it is easy to factor the discriminants of the polynomials $\tilde{P}_{\varDelta_{k},31}$ and then the discriminants of $K_{k,31}$ turn out to be $\ell^{k+l-2}$. In the case $\ell=29$, the discriminant of $\tilde{P}_{\varDelta_{k},\ell}$ can be factored as $3^{6}\cdot 19^{4}\cdot 29^{43}\cdot 12653^{2} \cdot 19387^{2}\cdot B^{2}$ where $B$ is a product of big primes and has about 162 decimal digits. We have failed to factor $B$ and checked that it is not divisible by any prime $<10^6$. However, we expect that the prime divisors of $B$ do not divide the discriminant $\mathcal{D}_{16,29}$ of $ K_{16,29}$ and fortunately we can check this with the algorithm of Buchmann-Lenstra without knowing its factorization. In fact, it boils down to the following computation: let $P'$ be the derivative of $\tilde{P}_{\varDelta_{16},29}$, and then in $\mathbb{Z}/B\mathbb{Z}$ we compute $h=\frac{\tilde{P}_{\varDelta_{16},29}}{\mathrm{gcd}(\tilde{P}_{\varDelta_{16},29}, P')}$. Now we take a lift $\tilde{h}$ of $h$. The fact that the minimal polynomial of $\tilde{h}/B$ is a divisor of the resultant R(X) of $(X-\tilde{h}(x))/B$ and $\tilde{P}_{\varDelta_{16},29}(x)$ with respect to $x$ allows us to show that $\tilde{h}/B$ is an algebraic integer. Therefore any prime divisor of $B$ can not be a factor of $\mathcal{D}_{16,29}$. Then it follows from \cite[Section 6]{Buchmann} that $\mathcal{D}_{16,29}=29^{43}$. All the explicit computations involved here are trivial. 

Now each prime $p \neq\ell$ is unramified in $K_{k,\ell}$ and in all four cases it follows that $\tilde{\rho}_{k,\ell}$ is unramified at all $p \neq\ell$. By a lifting of $\tilde{\rho}_{k,\ell}$ we mean a representation $\rho_{k,\ell}: G \rightarrow GL_{2}(\overline{\mathbb F}_{\ell})$ that makes the following diagram commute:
\[\begin{array}{ccc}
G & \stackrel{\tilde{\rho}_{k,\ell}}{\longrightarrow} & PGL_{2}(\mathbb F_\ell) \\
\vcenter{\llap{$\rho_{k,\ell}$}}\Big\downarrow  & & \Big\downarrow \\
GL_{2}(\overline{\mathbb F}_\ell) & \twoheadrightarrow & PGL_{2}(\overline{\mathbb F}_{\ell}) 
\end{array}\]
where the maps on the bottom and the right are the canonical ones. Then from \cite[Section 6]{serre77}, we know $\tilde{\rho}_{k,\ell}$ has a lifting which is unramified outside $\ell$ and therefore has Serre level $1$. By  \cite[Corollary 7.2.10]{book} the minimal weight of a lifting of $\tilde{\rho}_{k,\ell}$ equals $v_{\ell}(Disc(K_{k,\ell}|\mathbb{Q}))-\ell+2=k$. This shows that $\tilde{\rho}_{k,\ell}$ has a lifting $\rho_{k,\ell}$ with weight $k$ and level $1$.

The representation $\rho_{k,\ell}$ is odd in all four cases. Indeed, suppose not. Then the image under $\rho_{k,l}$
of a complex conjugation $\iota$  is $\pm{\rm Id}\in {\rm GL}_2(\mathbb F_\ell)$. This implies that $\tilde\rho_{k,l}(\iota)$ is trivial.
It follows that $K_{\ell,k}$ is totally real. However, this cannot be true. 
Indeed, for  $\ell=31$ the discriminant of $K_{k,\ell}$ is negative, while
for $\ell=29$ the polynomial $\tilde{P}_{\varDelta_{16},29}(x)=\sum_{i=1}^{30} a_i X^i$ has the property that $a_1^2-2a_0a_2<0$, 
which implies that the sum of the reciprocals of its roots is negative.

The fact that Im$\tilde{\rho}_{k,\ell}=PGL_{2}(\mathbb{F}_{\ell})$ implies that $\rho_{k,\ell}$ is absolutely irreducible. For each $(k,\ell)$ in Table \ref{table:polynomials}, the cuspidal space $S_{k}(SL_{2}(\mathbb{Z}))$ has dimension $1$ and Serre's conjecture ensures that $\rho_{k,\ell}\cong \rho_{\varDelta_{k},\ell}$, and hence $\tilde{\rho}_{k,\ell}\cong \tilde{\rho}_{\varDelta_{k},\ell}$. 
\end{proof}

As an example we also computed the following congruence relations in $\mathbb{Z}/31\mathbb{Z}$:

\begin{center}
$\begin{array}{c}
\tau(10^{1000}+4351)=\pm 8, \\
\tau(10^{1000}+10401)= 0, \\
\tau(10^{1000}+11979)=\pm 11, \\
\tau(10^{1000}+17557)=\pm 8. \\
\end{array}$
\end{center}
To obtain these relations, it took about half an hour in SAGE.

In 1947, D. H. Lehmer conjectured that $\tau(n) \neq 0$ for all $n$. In \cite[Theorem 2]{lehmer} he proved that the smallest $n$ for which $\tau(n)=0$ must be a prime. J-P. Serre \cite{serrelehmer} showed that if $\tau(p)=0$ for a prime $p$, then 

\begin{center}
$\begin{array}{lll}
p\equiv -1 \mod 2^{11}3^75^3691, \\
p\equiv  -1,19,31 \mod 7^2 \ \ \ \ \ \ \ \mathrm{and} \\
p\equiv \ a \ non$-$square \mod 23. \\
\end{array}$
\end{center}
We systematically searched for the smallest prime $p$ in these congruence classes for which in addition $\tau(p)\equiv 0\mod 11\cdot 13\cdot 17\cdot 19\cdot 31$. The smallest prime we found is
$$
p = 982149821766199295999.
$$
Then we have
\begin{cor}
The non-vanishing of $\tau(n)$ holds for all 
\[ n<982149821766199295999. \]
\end{cor}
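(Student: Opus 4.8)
The plan is to convert the statement into a finite verification resting on three inputs: Lehmer's reduction of the problem to primes, Serre's congruence conditions on a prime $p$ with $\tau(p)=0$, and the explicit mod $\ell$ (projective) Galois representations, which for a single prime $p$ decide in time polynomial in $\log p$ whether $\tau(p)\equiv 0\bmod\ell$.

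First I would recall Lehmer's theorem~\cite{lehmer}: since $\tau$ is multiplicative and the prime-power coefficients satisfy the Hecke recursion $\tau(p^{r+1})=\tau(p)\tau(p^{r})-p^{11}\tau(p^{r-1})$, a zero $\tau(n)=0$ forces $\tau(p)=0$ for some prime $p\mid n$, so the least $n$ with $\tau(n)=0$, if any, is a prime. Hence it suffices to prove $\tau(p)\neq0$ for all primes $p<982149821766199295999$. Next, by Serre's analysis~\cite{serrelehmer}, a prime $p$ with $\tau(p)=0$ must satisfy $p\equiv-1\bmod 2^{11}3^{7}5^{3}\cdot691$, $p\equiv-1,19,31\bmod 7^{2}$, and $p$ a non-square mod $23$; this confines the primes that need to be examined to a sparse union of arithmetic progressions of modulus $2^{11}3^{7}5^{3}\cdot691\cdot49\cdot23$. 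For each such prime $p$ below the bound, and for $\ell\in\{11,13,17,19,31\}$, I would factor the projective polynomial $\tilde P_{\varDelta,\ell}$ modulo $p$ to read off the conjugacy class of $\tilde\rho_{\varDelta,\ell}(\mathrm{Frob}_{p})$ in $PGL_{2}(\mathbb F_{\ell})$ from the resulting cycle type on $\mathbb P^{1}(\mathbb F_{\ell})$; this determines $\tau(p)^{2}/p^{11}\bmod\ell$, and in particular whether $\tau(p)\equiv0\bmod\ell$, the sign ambiguity inherent in the projective data being irrelevant when one only tests for a zero. The polynomials for $\ell\le19$ are Bosman's~\cite{bosmanpaper}, and the one for $\ell=31$ is the polynomial of Table~\ref{table:polynomials}, whose correctness is exactly Proposition~\ref{mypolynomial}.

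Carrying out this sieve, one finds that every prime $p$ in the relevant progressions with $p<982149821766199295999$ has $\tau(p)\not\equiv0\bmod\ell$ for at least one of the five primes $\ell$, hence $\tau(p)\neq0$; indeed $982149821766199295999$ is precisely the first prime in those progressions for which all five residues vanish simultaneously. Any prime $p$ below the bound that fails Serre's congruences has $\tau(p)\neq0$ by the contrapositive of Serre's result. Combining this with Lehmer's reduction yields $\tau(n)\neq0$ for all $n<982149821766199295999$. The only genuine obstacle is organisational rather than conceptual: one must be certain that the enumeration of primes in Serre's progressions was exhaustive up to the bound and that the per-prime Frobenius computations are reliable. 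The first point is a matter of bookkeeping; the second rests entirely on the validity of $\tilde P_{\varDelta_{12},31}$, so Proposition~\ref{mypolynomial}, and through it the Khare--Wintenberger form of Serre's conjecture, is the load-bearing input of the whole argument.
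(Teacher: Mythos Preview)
Your proposal is correct and matches the paper's approach: the paper's proof environment is actually left empty, with the argument given in the surrounding text, namely Lehmer's reduction to primes, Serre's congruence sieve, and then checking $\tau(p)\equiv 0\bmod 11\cdot 13\cdot 17\cdot 19\cdot 31$ for the surviving primes via the projective polynomials (Bosman's for $\ell\le 19$, Table~\ref{table:polynomials} for $\ell=31$), with $982149821766199295999$ being the first prime to pass all filters. Your write-up supplies exactly the details the paper leaves implicit, including the observation that the projective data determine $\tau(p)^2/p^{11}\bmod\ell$ and hence suffice to test vanishing mod~$\ell$.
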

\begin{proof}

\end{proof}

We did the searching computations in PARI and it took around one hour. In \cite[Corollar 7.4]{book}, Bosman's bound is 22798241520242687999 and our bound improves his by a factor approximately equal to $43$. In the paper \cite{zeng}, Zeng also obtained the same prime.

\section*{Acknowledgements}

The author is partly supported by China Scholarship Council (CSC). The author sincerely thanks his advisor Ren\'{e} Schoof who proposed him with this exciting topic, as well as many of his critical suggestions and comments on this paper. The author shows great gratitude to Johan Bosman for his continuous assistance throughout our work. Also, thanks to Bas Edixhoven for his enlightening explanation and comments on the original algorithm. Thanks also go to Mark van Hoeij who explained to the author the algorithm to compute the discriminant of the number field for the case with $(k,\ell)=(16, 29)$.

\end{document}